\newtheorem{theorem}{Theorem}[section]
\newtheorem{lemma}[theorem]{Lemma}
\newtheorem{cor}[theorem]{Corollary}
\theoremstyle{definition}
\newtheorem{example}[theorem]{Example}
\newtheorem{conjecture}[theorem]{Conjecture}
\theoremstyle{remark}
\newtheorem{remark}[theorem]{Remark}
\numberwithin{equation}{section}
\def\bM{\mathbb{M}}
\begin{document}
\baselineskip=15pt

\title{ On a decomposition lemma for  positive semi-definite block-matrices }

\author{Jean-Christophe Bourin{\footnote{Supported by ANR 2011-BS01-008-01.}},  Eun-Young Lee{\footnote{Research  supported by  Basic Science Research Program
through the National Research Foundation of Korea (NRF) funded by the Ministry of Education,
Science and Technology (2010-0003520)}}, Minghua Lin}

\date{January, 2012}

\maketitle

\begin{abstract}
\noindent
This short note, in part of expository nature, points out several new or recent consequences of a  quite nice decomposition for  positive semi-definite matrices.
\end{abstract}

{\small\noindent
Keywords: Operator inequalities, positive semi-definite matrix, unitary orbit,
symmetric norm.
\noindent
AMS subjects classification 2010:  15A60, 47A30,  15A42.}

\section{A decomposition lemma}
\noindent
Let $\bM_n$ denote the space of $n\times n$ complex matrices, or operators on a finite dimensional Hilbert space, and let  $\bM_n^+$ be the positive (semi-definite) part.
For positive  block-matrices,
 $$\begin{bmatrix} A &X \\
X^* &B\end{bmatrix}\in \bM_{n+m}^+, \qquad \hbox{with}~A\in\bM_n^+, \,  B\in\bM_m^+,
$$
we have a remarkable  decomposition lemma  for elements in $\bM_{n+m}^+$ noticed in \cite{BL}:

\begin{lemma} \label{BL-lemma} For every matrix in  $\bM_{n+m}^+$ written in blocks, we have the decomposition
\begin{equation*}
\begin{bmatrix} A &X \\
X^* &B\end{bmatrix} = U
\begin{bmatrix} A &0 \\
0 &0\end{bmatrix} U^* +
V\begin{bmatrix} 0 &0 \\
0 &B\end{bmatrix} V^*
\end{equation*}
for some unitaries $U,\,V\in  \bM_{n+m}$.
\end{lemma}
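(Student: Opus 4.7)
The plan is to factor $C := \begin{bmatrix} A & X \\ X^* & B \end{bmatrix}$ through its positive square root $R = C^{1/2} \in \bM_{n+m}^+$, which is Hermitian, and to split $R$ column-wise. I would partition $R = \begin{bmatrix} R_1 & R_2 \end{bmatrix}$ with $R_1$ of size $(n+m) \times n$ and $R_2$ of size $(n+m) \times m$. Using $R^* = R$, the key move is to read off $C$ in two different ways from the same object:
\[
C = R^*R = \begin{bmatrix} R_1^*R_1 & R_1^*R_2 \\ R_2^*R_1 & R_2^*R_2 \end{bmatrix}, \qquad C = RR^* = R_1R_1^* + R_2R_2^*.
\]
The first presentation identifies $A = R_1^*R_1$ and $B = R_2^*R_2$, while the second already exhibits $C$ as a sum of two PSD matrices in $\bM_{n+m}$.

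The remaining step is to recognise each summand as a unitary conjugate of the corresponding diagonal block. I would invoke the standard fact that $S^*S$ and $SS^*$ share the same non-zero eigenvalues with multiplicities, applied to $S = R_1$: the $(n+m)\times(n+m)$ matrix $R_1R_1^*$ then has exactly the eigenvalues of $A$ padded by $m$ zeros, which is the same eigenvalue list as $\begin{bmatrix} A & 0 \\ 0 & 0 \end{bmatrix}$. Since the two matrices are PSD of the same size with identical spectrum, there exists a unitary $U \in \bM_{n+m}$ such that $R_1R_1^* = U\begin{bmatrix} A & 0 \\ 0 & 0 \end{bmatrix}U^*$. A symmetric argument applied to $R_2$ yields a unitary $V$ with $R_2R_2^* = V\begin{bmatrix} 0 & 0 \\ 0 & B \end{bmatrix}V^*$, and substituting these into the identity $C = R_1R_1^* + R_2R_2^*$ produces the claimed decomposition.

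I do not anticipate a real obstacle: once the square root of $C$ itself is chosen as the factor, the Hermitian symmetry $R^* = R$ automatically generates both presentations of $C$ from a single object, and the rest is bookkeeping of eigenvalue multiplicities. The only mildly delicate point is verifying that the zero multiplicities match on both sides of the isospectrality $R_iR_i^* \sim R_i^*R_i \oplus 0$, which is immediate from the rectangular shapes of $R_1$ and $R_2$. The elegance of the approach is that both the diagonal blocks of $C$ and the PSD summands of the decomposition emerge from the \emph{same} column partition of $C^{1/2}$.
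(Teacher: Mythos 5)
Your proof is correct, and it is essentially the standard argument for this lemma (the one given in the cited reference \cite{BL}): partition the square root of the block matrix into column blocks $R=[R_1\ R_2]$, read off $A=R_1^*R_1$, $B=R_2^*R_2$ from $R^*R$ and the sum $R_1R_1^*+R_2R_2^*$ from $RR^*$, then use that $R_iR_i^*$ is unitarily similar to $R_i^*R_i$ padded with zeros. Choosing the Hermitian square root so that $R^*R=RR^*$ coincide is a small streamlining, but the route is the same.
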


The motivation for such a decomposition is various inequalities for convex or concave functions of positive operators partitioned in blocks. These results are extensions of some classical majorization, Rotfel'd and Minkowski type inequalities. Lemma \ref{BL-lemma} actually implies a host of such inequalities as shown in the recent papers \cite{BH1} and \cite{BL} where a proof of Lemma \ref{BL-lemma} can be found too.

This note aims to give in the next section further  consequences of the above decomposition lemma, including a simple proof of a recent majorization shown in \cite{LW}.

 Most of the corollaries  below are rather straightforward consequences of Lemma \ref{BL-lemma}, except Corollary \ref{cor-p} which also requires some more elaborated estimates. Corollary \ref{LW-maj} is the majorization given in \cite{LW},  a remarkable extension of the basic and useful inequality
\begin{equation}\label{class}
\left\| \begin{bmatrix} A &0 \\
0 &B\end{bmatrix} \right\| \le
\left\|  A +B \right\|
\end{equation}
for all $A, B\in \bM_n^+$ and all symmetric (or unitarily invariant) norms. The recent survey \cite{H} provides a good exposition on these classical norms.

\section{Some Consequences}

If we first use a unitary congruence with
\begin{equation*}
J=\frac{1}{\sqrt{2}}\begin{bmatrix} I &-I \\
I &I\end{bmatrix}
\end{equation*}
where $I$ is the identity of $\bM_n$, we observe that
\begin{equation*}
J\begin{bmatrix} A &X \\
X^* &B\end{bmatrix} J^*= \begin{bmatrix} \frac{A+B}{2} +{\mathrm{Re}} X& \ast \\
\ast & \frac{A+B}{2} -{\mathrm{Re}} X\end{bmatrix}
\end{equation*}
where $\ast$ stands for unspecified entries and ${\mathrm{Re}}X=(X+X^*)/2$. Thus  Lemma \ref{BL-lemma} yields:

\begin{cor} \label{Lin-prop2} For every matrix in  $\bM_{2n}^+$ written in blocks of the same size, we have a decomposition
\begin{equation*}
\begin{bmatrix} A &X \\
X^* &B\end{bmatrix} = U
\begin{bmatrix} \frac{A+B}{2} +{\mathrm{Re}} X &0 \\
0 &0\end{bmatrix} U^* +
V\begin{bmatrix} 0 &0 \\
0 & \frac{A+B}{2} -{\mathrm{Re}} X\end{bmatrix} V^*
\end{equation*}
for some unitaries $U,\,V\in  \bM_{2n}$.
\end{cor}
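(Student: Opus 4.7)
The plan is to reduce the statement directly to Lemma \ref{BL-lemma} via the unitary congruence by $J$ already computed in the paper. First I would verify that $J=\frac{1}{\sqrt{2}}\begin{bmatrix} I & -I \\ I & I\end{bmatrix}$ is unitary (a quick block computation gives $JJ^*=J^*J=I_{2n}$), so that the congruence $M\mapsto JMJ^*$ preserves positivity. Applying this to
$$M=\begin{bmatrix} A & X \\ X^* & B\end{bmatrix}\in \bM_{2n}^+$$
yields, by the displayed identity immediately preceding the corollary, the positive matrix
$$JMJ^* = \begin{bmatrix} \frac{A+B}{2}+\mathrm{Re}\,X & \ast \\ \ast & \frac{A+B}{2}-\mathrm{Re}\,X\end{bmatrix} \in \bM_{2n}^+.$$

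Next I would invoke Lemma \ref{BL-lemma} on $JMJ^*$: since the diagonal blocks of $JMJ^*$ are exactly $\frac{A+B}{2}\pm\mathrm{Re}\,X$, the lemma furnishes unitaries $U_1,V_1\in \bM_{2n}$ such that
\begin{equation*}
JMJ^* = U_1\begin{bmatrix} \frac{A+B}{2}+\mathrm{Re}\,X & 0 \\ 0 & 0\end{bmatrix}U_1^* + V_1\begin{bmatrix} 0 & 0 \\ 0 & \frac{A+B}{2}-\mathrm{Re}\,X\end{bmatrix}V_1^*.
\end{equation*}

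Finally, I would conjugate both sides by $J^*$ on the left and $J$ on the right to get back $M$ on the left, and absorb the factor of $J^*$ into the outer unitaries: setting $U:=J^*U_1$ and $V:=J^*V_1$ (both unitary as products of unitaries) gives exactly the desired decomposition of $M$. The main (and only) subtlety is keeping track that the two unspecified off-diagonal $\ast$ blocks in $JMJ^*$ play no role, since Lemma \ref{BL-lemma} is applied as a black box to the whole positive matrix $JMJ^*$ and only uses its diagonal blocks in the statement; no further estimates or case analysis are required.
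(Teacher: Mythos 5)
Your proposal is correct and follows exactly the route the paper intends: conjugate by the unitary $J$ to make the diagonal blocks $\frac{A+B}{2}\pm\mathrm{Re}\,X$, apply Lemma \ref{BL-lemma} to the resulting positive matrix, and absorb $J^*$ into the unitaries when undoing the congruence. You have merely made explicit the steps the paper leaves implicit after the displayed computation of $JMJ^*$.
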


 This is equivalent to Corollary \ref{Lin-prop1} below by the obvious unitary congruence
\begin{equation*}
\begin{bmatrix} A &X \\
X^* &B\end{bmatrix}
 \simeq \begin{bmatrix} A &iX \\
-iX^* &B\end{bmatrix}.
\end{equation*}

In Corollary \ref{Lin-prop2}, if $A,B,X$ have real entries, i.e., if we are dealing with an operator on a real Hilbert space ${\mathcal{ H}}$ of dimension $2n$, then $U, V$ can be taken with real entries, thus are isometries on ${\mathcal{ H}}$. Do we have the same for Corollary \ref{Lin-prop1} ?  The answer might be negative, but an explicit counter-example would be desirable.

\begin{cor}\label{Lin-prop1} For every matrix in  $\bM_{2n}^+$ written in blocks of same size, we have a decomposition
\begin{equation*}
\begin{bmatrix} A &X \\
X^* &B\end{bmatrix} = U
\begin{bmatrix} \frac{A+B}{2} +{\mathrm{Im}} X &0 \\
0 &0\end{bmatrix} U^* +
V\begin{bmatrix} 0 &0 \\
0 & \frac{A+B}{2} -{\mathrm{Im}} X\end{bmatrix} V^*
\end{equation*}
for some unitaries $U,\,V\in  \bM_{2n}$.
\end{cor}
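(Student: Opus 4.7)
The text itself already hints at the approach: Corollary \ref{Lin-prop1} should be derived from Corollary \ref{Lin-prop2} by the unitary congruence indicated between the two statements. My plan is to make that derivation explicit.

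First I would introduce the diagonal unitary $D = \mathrm{diag}(I, iI) \in \bM_{2n}$ and compute
\begin{equation*}
D\begin{bmatrix} A & X \\ X^* & B \end{bmatrix} D^* = \begin{bmatrix} A & -iX \\ iX^* & B \end{bmatrix},
\end{equation*}
which is still an element of $\bM_{2n}^+$ since unitary congruence preserves positivity. Setting $Y = -iX$, this is a positive block matrix of the form covered by Corollary \ref{Lin-prop2}, and the key identity to record is
\begin{equation*}
{\mathrm{Re}}\, Y = \frac{-iX + iX^*}{2} = \frac{X - X^*}{2i} = {\mathrm{Im}}\, X.
\end{equation*}

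Next I would apply Corollary \ref{Lin-prop2} to $D\begin{bmatrix} A & X \\ X^* & B \end{bmatrix}D^*$, obtaining unitaries $U', V' \in \bM_{2n}$ such that
\begin{equation*}
D\begin{bmatrix} A & X \\ X^* & B \end{bmatrix}D^* = U'\begin{bmatrix} \frac{A+B}{2} + {\mathrm{Im}}\, X & 0 \\ 0 & 0 \end{bmatrix} U'^* + V'\begin{bmatrix} 0 & 0 \\ 0 & \frac{A+B}{2} - {\mathrm{Im}}\, X \end{bmatrix} V'^*.
\end{equation*}
Multiplying both sides by $D^*$ on the left and $D$ on the right and setting $U = D^* U'$, $V = D^* V'$ (both unitary, as products of unitaries) yields exactly the desired decomposition.

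There is no real obstacle here: the only thing that has to be tracked carefully is the sign in the identification ${\mathrm{Re}}(-iX) = {\mathrm{Im}}\, X$, which dictates the choice $D = \mathrm{diag}(I, iI)$ rather than $\mathrm{diag}(I, -iI)$. Everything else is a bookkeeping step, and in particular the two corollaries are strictly equivalent via this elementary trick.
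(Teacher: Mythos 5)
Your proposal is correct and is exactly the argument the paper intends: the authors simply remark that Corollary \ref{Lin-prop1} follows from Corollary \ref{Lin-prop2} via the unitary congruence with $\mathrm{diag}(I,\pm iI)$, and you have carried out that bookkeeping (including the sign check $\mathrm{Re}(-iX)=\mathrm{Im}\,X$) correctly. Nothing is missing.
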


Here ${\mathrm{Im}} X=(X-X^*)/2i$. The decomposition allows to obtain some norm estimates depending on how  the full matrix is far from a block-diagonal matrix.   If $Z\in \bM_n$, its absolute value is $|Z|:=(Z^*Z)^{1/2}$. If $A, B\in \bM_n$ are Hermitian,  $A\le B$  means $B-A\in \bM_n^+$. Firstly, by noticing that ${\mathrm{Im}} X \le |{\mathrm{Im}} X |= \frac{1}{2}|X-X^*|$, we have:

\begin{cor}\label{cor-Lin-ineq} For every matrix in  $\bM_{2n}^+$ written in blocks of same size, we have
\begin{equation*}
\begin{bmatrix} A &X \\
X^* &B\end{bmatrix} \le \frac{1}{2} \left\{U
\begin{bmatrix} A+B +|X-X^*|&0 \\
0 &0\end{bmatrix} U^* +
V\begin{bmatrix} 0 &0 \\
0 & A+B+|X-X^*|\end{bmatrix} V^*\right\}
\end{equation*}
for some unitaries $U,\,V\in  \bM_{2n}$.
\end{cor}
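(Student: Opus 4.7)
The plan is to derive the inequality as a direct majorization of the exact decomposition supplied by Corollary \ref{Lin-prop1}. Since the matrix $T := {\mathrm{Im}} X = (X-X^*)/2i$ is Hermitian, we have $\pm T \le |T|$, which, when added to $(A+B)/2$, gives
\begin{equation*}
\frac{A+B}{2} + {\mathrm{Im}} X \ \le\ \frac{A+B}{2} + |{\mathrm{Im}} X|, \qquad \frac{A+B}{2} - {\mathrm{Im}} X \ \le\ \frac{A+B}{2} + |{\mathrm{Im}} X|.
\end{equation*}

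Next I would record the identity $|{\mathrm{Im}} X| = \tfrac12 |X-X^*|$. This follows from the observation that $T = (X-X^*)/2i$ is Hermitian, so $|T| = (T^2)^{1/2}$, together with $T^2 = -(X-X^*)^2/4$ and $|X-X^*|^2 = (X-X^*)^*(X-X^*) = -(X-X^*)^2$ (since $X-X^*$ is skew-Hermitian). Combining the two inequalities above with this identity shows that both scalar blocks $\frac{A+B}{2}\pm{\mathrm{Im}} X$ are dominated by the common upper bound $\frac{1}{2}(A+B+|X-X^*|)$.

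Finally, I would lift these scalar inequalities back to the block level. Padding with zero blocks preserves the operator order, and conjugating by the unitaries $U$ and $V$ given by Corollary \ref{Lin-prop1} does so as well. Replacing each of the two summands in the decomposition of Corollary \ref{Lin-prop1} by its upper bound, and then adding the two resulting operator inequalities, produces exactly the stated estimate on $\begin{bmatrix} A & X \\ X^* & B \end{bmatrix}$.

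The argument is essentially mechanical once Corollary \ref{Lin-prop1} is available; there is no real obstacle. The only small point requiring care is the identity $|{\mathrm{Im}} X| = \tfrac12 |X-X^*|$, which relies on the elementary fact that $|H| = (H^2)^{1/2}$ for Hermitian $H$ together with the skew-Hermitian structure of $X-X^*$.
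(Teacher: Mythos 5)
Your proposal is correct and follows exactly the paper's route: the paper also derives this corollary from Corollary \ref{Lin-prop1} via the observation $\pm{\mathrm{Im}}\,X \le |{\mathrm{Im}}\,X| = \tfrac12|X-X^*|$, with the block-padding and unitary conjugation steps left implicit. Your verification of the identity $|{\mathrm{Im}}\,X| = \tfrac12|X-X^*|$ is accurate and merely spells out what the paper states without proof.
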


We may then obtain estimates for the class of symmetric norms $\|\cdot\|$. Such a norm on $\bM_n$ satisfies
$\| UA\| =\| AU\| =\| A\|$ for all $A\in\bM_n$ and all unitaries $U\in\bM_n$. Since a symmetric norm on  $\bM_{n+m}$ induces a symmetric norm on $\bM_{n}$ we may assume that our norms are defined on all spaces $\bM_n$, $n\ge 1$.

\begin{cor}\label{cor-p} For every matrix in  $\bM_{2n}^+$ written in blocks of same size and for all symmetric norms, we have
\begin{equation*}
\left\|\begin{bmatrix} A &X \\
X^* &B\end{bmatrix}^p \right\| \le 2^{|p-1|}\left\{\| (A+B)^p \| +\| |X-X^*|^p \|\right\}
\end{equation*}
for all $p>0$.
\end{cor}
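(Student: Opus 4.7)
The plan is to start from Corollary~\ref{cor-Lin-ineq}, which supplies the operator upper bound $H \le \frac{1}{2}(UC_1U^* + VC_2V^*)$ with $C := A+B+|X-X^*|$, $C_1 := C \oplus 0_n$ and $C_2 := 0_n \oplus C$. Since $0 \le H \le K$ forces $\lambda_j(H)^p \le \lambda_j(K)^p$ for every $p>0$, this already gives $\|H^p\| \le 2^{-p}\|(UC_1U^* + VC_2V^*)^p\|$ in every symmetric norm, and the remaining work reduces to bounding $\|(UC_1U^* + VC_2V^*)^p\|$ by a suitable multiple of $\|C^p\|$ and then $\|C^p\|$ in terms of $\|(A+B)^p\|$ and $\||X-X^*|^p\|$. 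Each of these two sub-bounds takes a different shape on the two sides of $p=1$, so the argument naturally splits at $p=1$.

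For $0 < p \le 1$ I will use the Rotfel'd norm inequality $\|(X+Y)^p\| \le \|X^p+Y^p\|$ for positive $X,Y$: applied to $X=UC_1U^*, Y=VC_2V^*$ together with unitary invariance and the triangle inequality it yields $\|(UC_1U^* + VC_2V^*)^p\| \le 2\|C^p\|$, and applied again to $C = (A+B) + |X-X^*|$ it yields $\|C^p\| \le \|(A+B)^p\| + \||X-X^*|^p\|$. Combining these two estimates with the factor $2^{-p}$ delivers the constant $2^{1-p} = 2^{|p-1|}$.

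For $p \ge 1$ the concave Rotfel'd inequality is no longer available, so I descend to singular values. By Ky Fan's theorem, $\lambda(UC_1U^* + VC_2V^*) \prec_w \lambda(C_1) + \lambda(C_2)$; since $\lambda(C_1)$ and $\lambda(C_2)$ are both the vector $(\lambda_1(C),\ldots,\lambda_n(C),0,\ldots,0) \in \bR^{2n}$ in decreasing order, their pointwise sum is $(2\lambda_1(C),\ldots,2\lambda_n(C),0,\ldots,0)$. Because $t \mapsto t^p$ is non-decreasing, convex, and vanishes at $0$, weak majorization is preserved under componentwise application, and the invariance of symmetric norms under padding by zeros then yields $\|(UC_1U^* + VC_2V^*)^p\| \le 2^p\|C^p\|$, i.e.\ $\|H^p\| \le \|C^p\|$. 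To bound $\|C^p\|$ I apply Ky Fan to $C = (A+B)+|X-X^*|$, combine with the scalar convexity estimate $(a+b)^p \le 2^{p-1}(a^p+b^p)$ applied componentwise, and invoke the triangle inequality for the underlying symmetric gauge, arriving at $\|C^p\| \le 2^{p-1}(\|(A+B)^p\| + \||X-X^*|^p\|)$, matching $2^{|p-1|}$.

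The hardest step is the convex regime $p>1$, where $t^p$ is neither operator monotone nor (beyond $p=2$) operator convex, so the operator-level Rotfel'd argument behind the $p\le 1$ case collapses and must be replaced by the Ky Fan / scalar convexity chain above. The crucial combinatorial point keeping the final constant to $2^p$ in $\|(UC_1U^* + VC_2V^*)^p\| \le 2^p\|C^p\|$ is that $\lambda(C_1)$ and $\lambda(C_2)$ each occupy only $n$ of the available $2n$ slots, so their pointwise sum is $2\lambda(C)$ padded by zeros rather than a less structured vector.
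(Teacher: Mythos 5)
Your proof is correct and follows essentially the same route as the paper: both start from Corollary \ref{cor-Lin-ineq}, split at $p=1$, and in each regime perform the same two sub-bounds (first comparing with $(A+B+|X-X^*|)^p$, then separating the two summands), arriving at the same constants. The only difference is cosmetic: where the paper invokes the operator subadditivity \eqref{AB-ineq} and the elementary inequality \eqref{elem-1} as cited black boxes, you substitute the equivalent Rotfel'd norm inequality for $0<p\le 1$ and an explicit Ky Fan weak-majorization argument for $p\ge 1$.
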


\begin{proof} We first show the case $0<p<1$. From  \cite{AB} (for a proof see also, \cite[Section 3]{BL}) it is known that:

\noindent
{\it If $S,T\in\bM_n^+$ and if $f:[0,\infty)\to[0,\infty)$ is concave, then, for some unitary $U,V\in\bM_n^+$,
\begin{equation}\label{AB-ineq}
f(S+T)\le Uf(S)U^* +Vf(T)V^*.
\end{equation}
}
Applying \eqref{AB-ineq} to $f(t)=t^p$ and the RHS of Corollary \ref{cor-Lin-ineq} with
\begin{equation*}
S=\frac{1}{2} U
\begin{bmatrix} A+B +|X-X^*|&0 \\
0 &0\end{bmatrix} U^*, \quad
T=\frac{1}{2}V\begin{bmatrix} 0 &0 \\
0 & A+B+|X-X^*|\end{bmatrix} V^*
\end{equation*}
 we obtain
\begin{equation*}
\left\|\begin{bmatrix} A &X \\
X^* &B\end{bmatrix}^p \right\| \le 2^{1-p}\left\{\left\| \left(A+B +  |X-X^*|\right)^p \right\|\right\}
\end{equation*}
Applying again \eqref{AB-ineq} with $f(t)=t^p$, $S=A+B$ and $T=|X-X^*|$ yields
 the   result for $0<p<1$.

To get the  inequality for $p\ge 1$, it suffices to use in the RHS of Corollary \ref{cor-Lin-ineq} the elementary inequality, for $S,T\in\bM_n^+$,
\begin{equation}\label{elem-1}
\left\| \left(\frac{S+T}{2}\right)^p \right\| \le \frac{\|S ^p\| +\|T^p \|}{2}
\end{equation}
(see  \cite[Section 2]{BL} for much stronger results). With
\begin{equation*}
S=U
\begin{bmatrix} A+B +|X-X^*|&0 \\
0 &0\end{bmatrix} U^*, \quad
T=V\begin{bmatrix} 0 &0 \\
0 & A+B+|X-X^*|\end{bmatrix} V^*
\end{equation*}
we get from Corollary \ref{cor-Lin-ineq} and \eqref{elem-1}
\begin{equation*}
\left\|\begin{bmatrix} A &X \\
X^* &B\end{bmatrix}^p \right\| \le \left\| \left(A+B +|X-X^*|\right)^p \right\|
\end{equation*}
and another application of \eqref{elem-1} with $S= 2(A+B)$ and $T= 2|X-X^*|$ completes the proof.
\end{proof}

\begin{cor}\label{cor-a} For any matrix in $\bM_{2n}^+$ written in blocks of same size such that the right upper block $X$ is accretive, we have
\begin{equation*}
\left\|\begin{bmatrix} A &X \\
X^* &B\end{bmatrix} \right\| \le \| A+B \| + \| {\mathrm{Re}} X\|
\end{equation*}
for all symmetric norms.
\end{cor}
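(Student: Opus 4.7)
The plan is to apply Corollary \ref{Lin-prop2} directly. That decomposition, together with unitary invariance and the triangle inequality for the symmetric norm, gives
\begin{equation*}
\left\|\begin{bmatrix} A &X \\ X^* &B\end{bmatrix}\right\| \le \left\| \frac{A+B}{2} + {\mathrm{Re}}X \right\| + \left\| \frac{A+B}{2} - {\mathrm{Re}}X \right\|,
\end{equation*}
so it suffices to show that this sum is bounded by $\|A+B\| + \|{\mathrm{Re}}X\|$.

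The key observation is that both summands inside the norms are positive. Indeed, the congruence with $J$ used just before Corollary \ref{Lin-prop2} shows that
\begin{equation*}
\begin{bmatrix} \frac{A+B}{2} + {\mathrm{Re}}X & \ast \\ \ast & \frac{A+B}{2} - {\mathrm{Re}}X \end{bmatrix} = J \begin{bmatrix} A & X \\ X^* & B \end{bmatrix} J^* \in \bM_{2n}^+,
\end{equation*}
so its diagonal blocks $\frac{A+B}{2} \pm {\mathrm{Re}}X$ lie in $\bM_n^+$.

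Now I handle the two summands separately. For the first, the ordinary triangle inequality yields
\begin{equation*}
\left\| \frac{A+B}{2} + {\mathrm{Re}}X \right\| \le \tfrac{1}{2}\|A+B\| + \|{\mathrm{Re}}X\|.
\end{equation*}
For the second I use the accretivity hypothesis: $X$ accretive means ${\mathrm{Re}}X \ge 0$, hence
\begin{equation*}
0 \le \frac{A+B}{2} - {\mathrm{Re}}X \le \frac{A+B}{2}.
\end{equation*}
Since symmetric norms are monotone on the positive cone (Weyl monotonicity of eigenvalues combined with monotonicity of the symmetric gauge function), this gives $\|\frac{A+B}{2} - {\mathrm{Re}}X\| \le \tfrac{1}{2}\|A+B\|$.

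Adding the two estimates produces exactly $\|A+B\| + \|{\mathrm{Re}}X\|$, and the proof is complete. There is no real obstacle here; the only substantive point is recognizing that accretivity is used precisely to convert the second summand, bounded above in the operator order by $\frac{A+B}{2}$, into a norm bound via monotonicity — without accretivity one could only apply the triangle inequality and would obtain the weaker estimate $\|A+B\| + 2\|{\mathrm{Re}}X\|$.
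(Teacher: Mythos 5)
Your proof is correct. You start exactly where the paper does (Corollary \ref{Lin-prop2}, unitary invariance, the triangle inequality, and accretivity to control the $-\mathrm{Re}\,X$ term), and your verification that $\frac{A+B}{2}\pm\mathrm{Re}\,X\ge 0$ via the congruence by $J$ is the right justification for invoking monotonicity of symmetric norms on the positive cone. Where you diverge is in the final recombination: you estimate the two summands separately, using the plain triangle inequality on $\frac{A+B}{2}+\mathrm{Re}\,X$ and the order bound $0\le\frac{A+B}{2}-\mathrm{Re}\,X\le\frac{A+B}{2}$ on the other, then add. The paper instead works with the Ky Fan $k$-norms, uses the identity $\|Z_1^{\downarrow}\|_k+\|Z_2^{\downarrow}\|_k=\|Z_1^{\downarrow}+Z_2^{\downarrow}\|_k$ for positive $Z_1,Z_2$, and the Ky Fan dominance principle to first establish the stronger intermediate bound
\begin{equation*}
\left\| \begin{bmatrix} A &X \\ X^* &B\end{bmatrix} \right\| \le \left\| (A+B)^{\downarrow}+(\mathrm{Re}\, X)^{\downarrow} \right\|
\end{equation*}
for all symmetric norms, and only then applies the triangle inequality. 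Your route is more elementary (no majorization machinery), at the cost of not recording that sharper eigenvalue-level statement; the final corollary is obtained either way.
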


\begin{proof}
By Corollary \ref{Lin-prop2}, for all Ky Fan $k$-norms $\|\cdot\|_{k}$, $k=1,\ldots,2n$, we have
\begin{equation*}
\left\| \begin{bmatrix} A &X \\
X^* &B\end{bmatrix} \right\|_{k} \le
\left\| \begin{bmatrix} \frac{A+B}{2} +{\mathrm{Re}} X &0 \\
0 &0\end{bmatrix} \right\|_{k}  +
\left\| \begin{bmatrix} 0 &0 \\
0 & \frac{A+B}{2} \end{bmatrix} \right\|_{k}.
\end{equation*}
Equivalently,
\begin{equation*}
\left\| \begin{bmatrix} A &X \\
X^* &B\end{bmatrix} \right\|_{k} \le
\left\|  \left(\frac{A+B}{2} +{\mathrm{Re}} X\right)^{\downarrow}  \right\|_{k}  +
\left\|
  \left(\frac{A+B}{2}\right)^{\downarrow}   \right\|_{k}
\end{equation*}
where $Z^{\downarrow}$ stands for the diagonal matrix listing the eigenvalues of $Z\in\bM_n^+$ in decreasing order.
By using the triangle inequality for $\|\cdot\|_{k}$ and the fact that
$$
\|Z_1^{\downarrow}\|_{k} +\| Z_2^{\downarrow}\|_{k} =\|Z_1^{\downarrow}+Z_2^{\downarrow}\|_{k}
$$
for all $Z_1,Z_2\in\bM_n^+$ we infer
\begin{equation*}
\left\| \begin{bmatrix} A &X \\
X^* &B\end{bmatrix} \right\|_{k} \le
\left\|  \left(A+B\right)^{\downarrow} +\left({\mathrm{Re}} X\right)^{\downarrow}  \right\|_{k}.
\end{equation*}
Hence
\begin{equation*}
\left\| \begin{bmatrix} A &X \\
X^* &B\end{bmatrix} \right\| \le
\left\|  \left(A+B\right)^{\downarrow} +\left({\mathrm{Re}} X\right)^{\downarrow}  \right\|
\end{equation*}
for all symmetric norms. The triangle inequality completes the proof
\end{proof}

\begin{cor} For any matrix in $\bM_{2n}^+$ written in blocks of same size such that $0\notin W(X)$, the numerical range the of right upper block $X$, we have
\begin{equation*}
\left\|\begin{bmatrix} A &X \\
X^* &B\end{bmatrix} \right\| \le \| A+B \| + \| X\|
\end{equation*}
for all symmetric norms.
\end{cor}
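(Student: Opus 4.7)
The plan is to reduce to Corollary \ref{cor-a} by rotating $X$ via a diagonal unitary congruence. The hypothesis $0\notin W(X)$ will give us a scalar of modulus one that, when multiplied into $X$, makes the resulting upper-right block accretive.

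First, I would invoke the classical fact that $W(X)$ is a compact convex subset of $\mathbb{C}$. Since $0\notin W(X)$, a separating line through the origin gives a real number $\theta$ such that $\mathrm{Re}(e^{i\theta}z)>0$ for every $z\in W(X)$; equivalently, $e^{i\theta}X$ is (strictly) accretive, i.e., $\mathrm{Re}(e^{i\theta}X)>0$.

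Next I would apply the diagonal unitary congruence with $U=\mathrm{diag}(e^{i\theta/2}I,\,e^{-i\theta/2}I)\in\bM_{2n}$, which yields
\begin{equation*}
U\begin{bmatrix} A & X \\ X^* & B \end{bmatrix}U^{*}=\begin{bmatrix} A & e^{i\theta}X \\ e^{-i\theta}X^{*} & B \end{bmatrix}.
\end{equation*}
This congruence preserves positivity and, being a unitary conjugation, leaves any symmetric norm of the full block-matrix unchanged. The new right-upper block $e^{i\theta}X$ is accretive, so Corollary \ref{cor-a} applies and gives
\begin{equation*}
\left\|\begin{bmatrix} A & X \\ X^* & B \end{bmatrix}\right\| = \left\|\begin{bmatrix} A & e^{i\theta}X \\ e^{-i\theta}X^{*} & B \end{bmatrix}\right\| \le \|A+B\| + \|\mathrm{Re}(e^{i\theta}X)\|.
\end{equation*}

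Finally, using $\mathrm{Re}(e^{i\theta}X)=\tfrac{1}{2}(e^{i\theta}X+e^{-i\theta}X^{*})$, the triangle inequality for the symmetric norm together with the identity $\|X^{*}\|=\|X\|$ gives $\|\mathrm{Re}(e^{i\theta}X)\|\le \|X\|$, completing the proof. The only non-routine ingredient is the numerical-range observation in the first paragraph, and that is entirely standard; the rest of the argument is mechanical once Corollary \ref{cor-a} is in hand.
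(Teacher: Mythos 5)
Your proof is correct and follows essentially the same route as the paper: rotate $X$ by a unimodular scalar via a diagonal unitary congruence so that the new off-diagonal block is accretive, then apply Corollary \ref{cor-a}. The only difference is that you spell out the final step $\|{\mathrm{Re}}(e^{i\theta}X)\|\le\|X\|$, which the paper leaves implicit.
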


\begin{proof} The condition $0\notin W(X)$ means that $zX$ is accretive for some complex number $z$ in the unit circle. Making use of the unitary congruence
\begin{equation*}
\begin{bmatrix} A &X \\
X^* &B\end{bmatrix}
 \simeq \begin{bmatrix} A &zX \\
\overline{z}X^* &B\end{bmatrix}
\end{equation*}
we obtain the result from Corollary \ref{cor-a}.
\end{proof}

The condition $0\notin W(X)$ in the previous corollary can obviously be relaxed to  $0$ does not belong to the relative interior of $X$, denoted by $W_{int}(X)$. In case of the usual operator norm $\|\cdot\|_{\infty}$, this can be restated with the numerical radius $w(X)$:

\begin{cor} For any matrix in $\bM_{2n}^+$ written in blocks of same size such that $0\notin W_{int}(X)$, the relative interior of the numerical range the of right upper block $X$, we have
\begin{equation*}
\left\|\begin{bmatrix} A &X \\
X^* &B\end{bmatrix} \right\|_{\infty} \le \| A+B \|_{\infty} + w( X).
\end{equation*}
\end{cor}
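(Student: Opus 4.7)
The plan is to reduce the problem to the accretive case handled by Corollary \ref{cor-a}, and then to exploit the special feature of the operator norm: for a positive semi-definite matrix, $\|\cdot\|_\infty$ is a supremum of real-part quadratic forms, which is automatically bounded by the numerical radius. This is the reason the estimate strengthens from $\|X\|$ to $w(X)$ precisely when one passes from general symmetric norms to $\|\cdot\|_\infty$.

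The first step is to use convexity of $W(X)\subset\mathbb{C}$ together with the hypothesis $0\notin W_{int}(X)$. This furnishes a supporting line of $W(X)$ through the origin, equivalently, a complex number $z$ on the unit circle with $\mathrm{Re}\,\langle zXv,v\rangle\ge 0$ for every unit vector $v$, i.e. $\mathrm{Re}(zX)\ge 0$, so $zX$ is accretive. The unitary congruence
\begin{equation*}
\begin{bmatrix} A & X \\ X^* & B \end{bmatrix} \simeq \begin{bmatrix} A & zX \\ \overline{z}X^* & B \end{bmatrix}
\end{equation*}
leaves the operator norm of the block-matrix invariant, while $\|A+B\|_\infty$ and $w(X)=w(zX)$ are also unchanged. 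So I may assume from the start that $X$ itself is accretive.

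The second step is then an immediate application of Corollary \ref{cor-a} in the operator norm, yielding
\begin{equation*}
\left\|\begin{bmatrix} A & X \\ X^* & B \end{bmatrix}\right\|_\infty \le \|A+B\|_\infty + \|\mathrm{Re}\,X\|_\infty.
\end{equation*}
The third and final step is the observation that, because $\mathrm{Re}\,X\ge 0$,
\begin{equation*}
\|\mathrm{Re}\,X\|_\infty = \sup_{\|v\|=1}\langle (\mathrm{Re}\,X)v,v\rangle = \sup_{\|v\|=1}\mathrm{Re}\,\langle Xv,v\rangle \le \sup_{\|v\|=1}|\langle Xv,v\rangle| = w(X),
\end{equation*}
which yields the stated bound.

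I do not anticipate a serious technical obstacle. The only point requiring a moment's care is the boundary case where $0\in\partial W(X)\setminus W_{int}(X)$; here the separating complex number $z$ only gives $\mathrm{Re}(zX)\ge 0$ rather than a strictly positive real part, but this is exactly the hypothesis of Corollary \ref{cor-a}, so no perturbation or limiting argument is needed.
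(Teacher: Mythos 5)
Your proof is correct and follows exactly the route the paper intends (the paper leaves it implicit): rotate $X$ to $zX$ with $\mathrm{Re}(zX)\ge 0$ using a line through the origin supporting the convex set $W(X)$, apply Corollary \ref{cor-a}, and bound $\|\mathrm{Re}(zX)\|_\infty$ by $w(zX)=w(X)$. All three steps, including the handling of the boundary case, are sound.
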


In case of the operator norm, we also infer from Corollary \ref{Lin-prop2} the following result:

\begin{cor} For any matrix in $\bM_{2n}^+$ written in blocks of same size, we have
\begin{equation*}
\left\|\begin{bmatrix} A &X \\
X^* &B\end{bmatrix} \right\|_{\infty} \le \| A+B \|_{\infty} + 2w( X).
\end{equation*}
\end{cor}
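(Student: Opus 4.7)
The plan is a direct application of Corollary \ref{Lin-prop2} combined with the triangle inequality for the operator norm. Taking $\|\cdot\|_\infty$ on both sides of the decomposition in that corollary and using unitary invariance, I would first obtain
\begin{equation*}
\left\|\begin{bmatrix} A &X \\ X^* &B\end{bmatrix}\right\|_\infty \le \left\|\frac{A+B}{2} + {\mathrm{Re}} X\right\|_\infty + \left\|\frac{A+B}{2} - {\mathrm{Re}} X\right\|_\infty,
\end{equation*}
using that a block-diagonal matrix with one zero block has operator norm equal to the operator norm of its nonzero block.

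Next I would apply the triangle inequality to each Hermitian summand on the right, together with the positivity of $(A+B)/2$ (so that $\|(A+B)/2\|_\infty=\|A+B\|_\infty/2$), to get the bound $\|A+B\|_\infty + 2\|{\mathrm{Re}} X\|_\infty$. The final step is to invoke the standard inequality $\|{\mathrm{Re}} X\|_\infty \le w(X)$, which follows from $|{\mathrm{Re}}\langle Xv,v\rangle|\le|\langle Xv,v\rangle|\le w(X)$ for every unit vector $v$, combined with the variational characterization $\|H\|_\infty = \sup_{\|v\|=1}|\langle Hv,v\rangle|$ valid for Hermitian $H$.

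There is no real obstacle here; once Corollary \ref{Lin-prop2} is in hand the argument is essentially a two-line computation, and the only care needed is the bookkeeping with the block-diagonal structure when passing to the operator norm. Incidentally, one could sharpen the inequality by first performing the diagonal unitary congruence $X \mapsto e^{i\theta}X$ and then optimizing over $\theta$, replacing $2w(X)$ by $2\inf_{\theta}\|{\mathrm{Re}}(e^{i\theta}X)\|_\infty$, but this refinement is not needed for the stated bound.
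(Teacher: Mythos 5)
Your proof is correct and follows essentially the same route as the paper: apply Corollary \ref{Lin-prop2}, take operator norms with the triangle inequality, and bound $\|{\mathrm{Re}}\,X\|_\infty$ by $w(X)$. The paper phrases the last step by first rotating $X\mapsto zX$ with $\|{\mathrm{Re}}(zX)\|_\infty=w(X)$, but this is only a cosmetic difference from your direct use of $\|{\mathrm{Re}}\,X\|_\infty\le w(X)$.
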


Once again, the proof follows by replacing $X$ by $zX$ where $z$ is a scalar in the unit circle  such that $w(X)=w(zX)=\|{\mathrm{Re}}(zX)\|_{\infty}$ and then by applying Corollary \ref{Lin-prop2}.

\begin{example}\label{ex1} By letting
\begin{equation*}
A=\begin{bmatrix} 1 & 0  \\ 0&0 \end{bmatrix}, \quad B= \begin{bmatrix} 0& 0  \\ 0&1 \end{bmatrix}, \quad
X=\begin{bmatrix} 0 & 1  \\ 0&0 \end{bmatrix}
\end{equation*}
we have an equality case in the previous corollary.  This example also gives an equality case in Corollary \ref{cor-p} for the operator norm and any $ p\ge1$.  (For  any $0<p<1$ and for the trace norm, equality occurs  in Corollary \ref{cor-p} with $A=B$ and $X=0$.)
\end{example}

From Corollary \ref{Lin-prop1} we also recapture in the next corollary the majorization result obtained in \cite{LW} for positive block-matrices whose off-diagonal blocks are Hermitian.
Example \ref{ex1} shows that the Hermitian requirement on the off-diagonal blocks is necessary.

\begin{cor}\label{LW-maj} Given any matrix in $\bM_{2n}^+$ written in blocks of same size with Hermitian off-diagonal blocks, we have
\begin{equation*}
\left\|\begin{bmatrix} A &X \\
X &B\end{bmatrix} \right\|\le \| A+B \|
\end{equation*} for all symmetric norms.
\end{cor}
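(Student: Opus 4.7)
The plan is to apply Corollary \ref{Lin-prop1} directly, using the fact that $X = X^*$ forces $\mathrm{Im}\, X = (X - X^*)/(2i) = 0$. Under this hypothesis the decomposition simplifies to
\begin{equation*}
\begin{bmatrix} A & X \\ X & B \end{bmatrix} = U \begin{bmatrix} \frac{A+B}{2} & 0 \\ 0 & 0 \end{bmatrix} U^* + V \begin{bmatrix} 0 & 0 \\ 0 & \frac{A+B}{2} \end{bmatrix} V^*
\end{equation*}
for some unitaries $U, V \in \bM_{2n}$. This is the critical structural fact; no extra estimates are needed since the two summands are unitary conjugates of block-diagonal matrices each of which has the single nonzero block $\frac{A+B}{2}$.

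Next, I would reduce the problem to Ky Fan $k$-norms, following exactly the strategy used in the proof of Corollary \ref{cor-a}. Applying the triangle inequality for $\|\cdot\|_k$ and unitary invariance gives
\begin{equation*}
\left\| \begin{bmatrix} A & X \\ X & B \end{bmatrix} \right\|_k \le \left\| \left(\frac{A+B}{2}\right)^{\downarrow} \right\|_k + \left\| \left(\frac{A+B}{2}\right)^{\downarrow} \right\|_k.
\end{equation*}
Then using the identity $\|Z_1^{\downarrow}\|_k + \|Z_2^{\downarrow}\|_k = \|Z_1^{\downarrow} + Z_2^{\downarrow}\|_k$ valid for $Z_1, Z_2 \in \bM_n^+$ (recalled in the proof of Corollary \ref{cor-a}), the right-hand side collapses to $\|(A+B)^{\downarrow}\|_k = \|A+B\|_k$.

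Finally, since the inequality holds for every Ky Fan $k$-norm ($k = 1, \ldots, 2n$), the Ky Fan dominance principle yields the inequality for every symmetric norm, completing the proof. Honestly, there is no serious obstacle here: the whole argument is a mild specialization of the proof of Corollary \ref{cor-a}, with the role of $\mathrm{Re}\, X$ played by $\mathrm{Im}\, X$, which vanishes by hypothesis. The only thing worth flagging is that one must use Corollary \ref{Lin-prop1} (with $\mathrm{Im}\, X$) rather than Corollary \ref{Lin-prop2} (with $\mathrm{Re}\, X$); using the latter would leave a nonzero $\mathrm{Re}\, X$ term and recover only Corollary \ref{cor-a}, not the sharper bound of \cite{LW}.
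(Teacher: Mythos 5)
Your proof is correct and follows exactly the route the paper intends: the text introducing Corollary \ref{LW-maj} says it is recaptured from Corollary \ref{Lin-prop1} (where $\mathrm{Im}\,X=0$ for Hermitian $X$), and the Ky Fan norm argument you borrow from the proof of Corollary \ref{cor-a} is the natural and correct way to finish. No gaps.
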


 Letting $X=0$ in the above corollary we get the basic inequality \eqref{class}. The last two corollaries seem to be folklore.

\begin{cor} Given any matrix in $\bM_{2n}^+$ written in blocks of same size, we have
\begin{equation*}
\left\|\begin{bmatrix} A &X \\
X^* &B\end{bmatrix} \oplus  \begin{bmatrix} A &X \\
X^* &B\end{bmatrix}  \right\|\le 2\| A\oplus B \|
\end{equation*} for all symmetric norms.
\end{cor}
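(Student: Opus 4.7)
The plan is to combine a one-line \emph{pinching} identity with the basic inequality \eqref{class}. Writing $M=\begin{bmatrix}A&X\\X^*&B\end{bmatrix}$, put $D=I_n\oplus(-I_n)\in\bM_{2n}$, which is a Hermitian unitary. Conjugation by $D$ flips the sign of the off-diagonal blocks of $M$ while leaving the diagonal blocks unchanged, so a direct block computation gives the pinching identity
\begin{equation*}
M+DMD^*=2(A\oplus B).
\end{equation*}

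Because $D$ is unitary, $DMD^*$ is unitarily equivalent to $M$; and the unitary $I_{2n}\oplus D\in\bM_{4n}$ conjugates $M\oplus M$ into $M\oplus DMD^*$. Hence the two $4n\times 4n$ matrices share the same singular values, so
\begin{equation*}
\left\|\begin{bmatrix}A&X\\X^*&B\end{bmatrix}\oplus\begin{bmatrix}A&X\\X^*&B\end{bmatrix}\right\|=\left\|\begin{bmatrix}M&0\\0&DMD^*\end{bmatrix}\right\|
\end{equation*}
for every symmetric norm.

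Now $M$ and $DMD^*$ are both positive elements of $\bM_{2n}^+$, so \eqref{class} applied in dimension $2n$ yields
\begin{equation*}
\left\|\begin{bmatrix}M&0\\0&DMD^*\end{bmatrix}\right\|\le\|M+DMD^*\|=2\|A\oplus B\|,
\end{equation*}
and chaining the two displays gives the claim. I anticipate no substantial obstacle: the pinching is a one-liner, the unitary equivalence is trivial, and the final inequality is a single invocation of the already-stated \eqref{class}. The whole proof fits in a few lines.
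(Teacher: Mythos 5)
Your proof is correct and is essentially identical to the paper's: the paper also conjugates one copy of the block matrix by $I_n\oplus(-I_n)$ to flip the sign of the off-diagonal blocks and then applies \eqref{class} to the two resulting positive summands, whose sum is $2(A\oplus B)$. You have simply written out the details that the paper leaves as ``obvious.''
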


\begin{proof} This follows from \eqref{class} and the obvious unitary congruence
\begin{equation*}
 \begin{bmatrix} A &X \\
X^* &B\end{bmatrix} \oplus  \begin{bmatrix} A &X \\
X^* &B\end{bmatrix}\simeq
\begin{bmatrix} A &X \\
X^* &B\end{bmatrix} \oplus  \begin{bmatrix} A &-X \\
-X^* &B\end{bmatrix}
\end{equation*}
\end{proof}

Let $||\cdot||_p$, $1\leq p < \infty$, denote the usual Schatten $p$-norms. The previous corollary entails the last one:

\begin{cor}\label{cor-schatt}  Given any matrix in $\bM_{2n}^+$ written in blocks of same size, we have
\begin{equation*}
\left\|\begin{bmatrix} A &X \\
X^* &B\end{bmatrix} \right\|_{p}\leq 2^{1-1/p} (\| A\|^{p}_{p}+\| B\|^{p}_{p})^{1/p}
\end{equation*} for all $p \in[1,\infty)$.
\end{cor}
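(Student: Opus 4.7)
The plan is to derive this directly from the previous corollary, specialized to the Schatten $p$-norm, by exploiting the way Schatten norms behave under direct sums. The key observation is that for any $Y\in\bM_N$ and any $p\in[1,\infty)$, $\|Y\oplus Y\|_p^p = 2\|Y\|_p^p$, so $\|Y\oplus Y\|_p = 2^{1/p}\|Y\|_p$, while similarly $\|A\oplus B\|_p = (\|A\|_p^p+\|B\|_p^p)^{1/p}$.

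First I would apply the previous corollary with $\|\cdot\|=\|\cdot\|_p$. Writing $M=\begin{bmatrix} A &X \\ X^* &B\end{bmatrix}$, this gives
\begin{equation*}
\| M\oplus M\|_p \le 2\| A\oplus B\|_p.
\end{equation*}
Then I would rewrite both sides using the direct-sum identities above: the left side equals $2^{1/p}\|M\|_p$ and the right side equals $2(\|A\|_p^p+\|B\|_p^p)^{1/p}$. Dividing by $2^{1/p}$ produces exactly
\begin{equation*}
\| M\|_p \le 2^{1-1/p}(\|A\|_p^p+\|B\|_p^p)^{1/p},
\end{equation*}
which is the desired inequality.

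There is no real obstacle here: the entire content has been absorbed into the previous corollary, and what remains is merely the bookkeeping for how $\|\cdot\|_p$ interacts with direct sums. If one wanted to emphasize optimality, one could note that the factor $2^{1-1/p}$ interpolates correctly between the trace case ($p=1$, factor $1$, recovering an instance of \eqref{class}) and the operator-norm case ($p\to\infty$, factor $2$), and that Example \ref{ex1} already exhibits matrices making related estimates sharp.
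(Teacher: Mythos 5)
Your proposal is correct and is exactly the paper's argument: the paper derives Corollary \ref{cor-schatt} from the preceding corollary by the same Schatten-norm direct-sum identities $\|M\oplus M\|_p=2^{1/p}\|M\|_p$ and $\|A\oplus B\|_p=(\|A\|_p^p+\|B\|_p^p)^{1/p}$, though it leaves this bookkeeping implicit. Nothing is missing.
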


Note that if $A=X=B$ we have an  equality case in Corollary \ref{cor-schatt}.

\begin{remark} Lemma 1.1 is still valid for compact operators on a Hilbert space, by taking $U$ and $V$ as partial isometries. A similar remark holds for the subadditivity inequality \eqref{AB-ineq}. Hence the symmetric norm inequalities in this paper may be extended to the setting of normed ideals of compact operators. 
\end{remark}

The lack of counter-example suggests that the following could hold:

\begin{conjecture} Corollary \ref{LW-maj} is still true when the off-diagonal blocks are normal.
\end{conjecture}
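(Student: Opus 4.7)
The plan is to reduce the conjecture to the Hermitian case (Corollary \ref{LW-maj}) by diagonalising the normal off-diagonal block and ``unwinding'' its complex phases. Since $X$ is normal, there is a unitary $W$ with $X = WDW^*$, $D = \mathrm{diag}(r_j e^{i\theta_j})$. Conjugating $M$ by $\mathrm{diag}(W,W)$ replaces $A,B$ by $W^*AW, W^*BW$ --- which does not change $\|A+B\|$ --- so we may assume $X=D$ is diagonal. Writing $D = \Phi\,|D|$ with $\Phi = \mathrm{diag}(e^{i\theta_j})$ a diagonal phase unitary commuting with $|D|$, the unitary congruence $\mathrm{diag}(I,\Phi)$ sends $D \mapsto |D|$ while replacing $B$ by $\Phi B \Phi^*$ and leaving $A$ fixed. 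The resulting block matrix has Hermitian (positive) off-diagonal, so Corollary \ref{LW-maj} yields
\begin{equation*}
\left\|\begin{bmatrix} A & X \\ X^* & B \end{bmatrix}\right\| \;\le\; \|A + \Phi B \Phi^*\|.
\end{equation*}

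The conjecture thus reduces to showing $\|A + \Phi B \Phi^*\| \le \|A+B\|$ for every symmetric norm, and here lies the main obstacle. The inequality is plainly false for arbitrary positive $A, B$ and diagonal unitaries $\Phi$, since $\Phi$ can be chosen to rotate the dominant eigenspace of $B$ closer to that of $A$ and inflate the top eigenvalues of the sum. So one must exploit the positivity of the original $M$: by Schur complements, $M \ge 0$ is equivalent to $|D|\,\Phi^* A^{+}\Phi\,|D| \le B$, which couples the phase twist $\Phi$ to the magnitude $|D|$. The crucial step is to make this coupling quantitative --- namely to prove that whenever $\Phi$ aligns eigenvectors of $B$ unfavourably against those of $A$, the positivity constraint forces $|D|$ (and hence $X$ itself) to shrink enough to compensate, exactly cancelling the would-be inflation of $\|A + \Phi B \Phi^*\|$.

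A parallel avenue is to apply Corollary \ref{Lin-prop1} to every phase-rotated version $\bigl[\begin{smallmatrix} A & e^{i\theta}X \\ e^{-i\theta}X^* & B \end{smallmatrix}\bigr]$, giving for each real $\theta$ the bound $\|M\|_k \le \|(A+B)/2 + K_\theta\|_k + \|(A+B)/2 - K_\theta\|_k$ with $K_\theta = \sin\theta\,\mathrm{Re}\,X + \cos\theta\,\mathrm{Im}\,X$. For Hermitian $X$ the choice $\theta = 0$ makes $K_0 = 0$ and recovers Corollary \ref{LW-maj}; for general normal $X$, $\mathrm{Re}\,X$ and $\mathrm{Im}\,X$ commute and can be simultaneously diagonalised, but different joint eigenspaces would require different optimal angles. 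A natural next step is therefore to seek a spectrally adapted refinement of Corollary \ref{Lin-prop1} in which the scalar rotation $e^{i\theta}$ is replaced by a unitary commuting with $X$, allowing each common eigenspace of $\mathrm{Re}\,X$ and $\mathrm{Im}\,X$ to use its own optimal phase; if such a refinement can be proved by a suitable dilation argument, the Hermitian proof would extend verbatim.
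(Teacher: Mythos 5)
This statement is posed in the paper as an open conjecture --- the authors explicitly offer no proof (``The lack of counter-example suggests that the following could hold'') --- so there is no argument of theirs to compare against, and your proposal must stand on its own. It does not: as you yourself flag, it is a reduction that stalls at its decisive step, not a proof. The first part is fine as far as it goes: conjugating by $\mathrm{diag}(W,W)$ and then by $\mathrm{diag}(I,\Phi)$ is a correct use of the normality of $X$, and Corollary \ref{LW-maj} legitimately yields $\|M\|\le\|A+\Phi B\Phi^*\|$. But the remaining inequality $\|A+\Phi B\Phi^*\|\le\|A+B\|$ is not merely unproved --- it is false for matrices satisfying the hypothesis of the conjecture, so this two-step chain cannot be repaired. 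Concretely, take $A,B>0$ and $X=\epsilon\Phi$ for a small $\epsilon>0$ and a diagonal unitary $\Phi$ chosen so that $\|A+\Phi B\Phi^*\|>\|A+B\|$; then $X$ is normal, the block matrix is positive semi-definite for $\epsilon$ small, the conjecture's claimed bound $\|M\|\le\|A+B\|$ is true (indeed $\|M\|\to\|A\oplus B\|$), yet your intermediate bound $\|A+\Phi B\Phi^*\|$ sits strictly above $\|A+B\|$. The defect is that applying Corollary \ref{LW-maj} after the phase twist discards $|D|$ entirely, so the positivity coupling between $\Phi$ and $|D|$ that you correctly identify via the Schur complement can no longer be fed back into the estimate. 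Any successful argument must keep $|D|$ in play past that step.

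Your second avenue has the same character: the identity $\mathrm{Im}(e^{i\theta}X)=\sin\theta\,\mathrm{Re}\,X+\cos\theta\,\mathrm{Im}\,X$ is correct, but a single scalar $\theta$ kills $K_\theta$ only when the spectrum of the normal matrix $X$ lies on a line through the origin (i.e.\ $e^{i\theta}X$ is Hermitian, which is just Corollary \ref{LW-maj} again up to congruence). The ``spectrally adapted refinement'' you ask for --- replacing $e^{i\theta}$ by a non-scalar unitary commuting with $X$ --- is not a unitary congruence of the full block matrix and so cannot be extracted from Corollary \ref{Lin-prop1} as stated; proving such a refinement is essentially the open problem itself. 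In short: the reduction to the Hermitian case is a reasonable first move and your diagnosis of where it fails is accurate, but no proof has been given, and the specific chain of inequalities proposed is provably a dead end.
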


\vskip 10pt

J.-C. Bourin,

Laboratoire de math\'ematiques,

Universit\'e de Franche-Comt\'e,

25 000 Besancon, France.

jcbourin@univ-fcomte.fr

\vskip 10pt
Eun-Young Lee

 Department of mathematics,

Kyungpook National University,

 Daegu 702-701, Korea.

eylee89@ knu.ac.kr

\vskip 10pt
Minghua Lin

 Department of applied mathematics,

University of Waterloo,

 Waterloo, ON, N2L 3G1, Canada.

mlin87@ymail.com

\end{document}